\setlist{nolistsep}
\newtheorem{thm}{Theorem}
\newtheorem*{thm*}{Main Theorem}
\newtheorem*{thm**}{Theorem}
\newtheorem{lem}[thm]{Lemma}
\theoremstyle{definition}
\theoremstyle{definition}
\theoremstyle{definition}
\theoremstyle{definition}
\theoremstyle{definition}
\theoremstyle{definition}
\theoremstyle{definition}
\newcommand{\R}{\ensuremath{\mathbb{R}}}
\newcommand{\N}{\ensuremath{\mathbb{N}}} 
\def\p{\partial}
\def\i{\infty}
\def\diam{\emph{diam}} 
\def\fr{\p}
\def\a{\alpha}
\def\d{\delta}
\def\D{\Delta} 
\def\e{\epsilon}
\def\H{\mathcal{H}}
\def\Id{\mathrm{Id}}
\newcommand\footnoteref[1]{\protected@xdef\@thefnmark{\ref{#1}}\@footnotemark}
\def\XXint#1#2#3{{\setbox0=\hbox{$#1{#2#3}{\int}$}
\vcenter{\hbox{$#2#3$}}\kern-.5\wd0}}
\renewcommand{\theenumi}{(\alph{enumi})}
\renewcommand{\p@enumii}{\theenumi}
\begin{document} 
	
\author[H. Pugh]{H. Pugh\\ Mathematics Department \\ Stony Brook University} 
\title{A Localized Besicovitch-Federer Projection Theorem}
\begin{abstract}
	The classical Besicovitch-Federer projection theorem implies that the \( d \)-dimensional Hausdorff measure of a set in Euclidean space with non-negligible \( d \)-unrectifiable part will strictly decrease under orthogonal projection onto almost every \( d \)-dimensional linear subspace. In fact, there exist maps which are arbitrarily close to the identity in the \( C^0 \) topology which have the same property. A converse holds as well, yielding the following rectifiability criterion: under mild assumptions, a set is rectifiable if and only if its Hausdorff measure is lower semi-continuous under bounded Lipschitz perturbations.
\end{abstract}
\maketitle

\section{Introduction}

Rectifiable sets are the higher-dimensional analogs of rectifiable curves, and play a deep role in geometric measure theory and the calculus of variations. A set \( E\subset \R^n \) is \emph{\( m \)-rectifiable} if there exists a sequence of Lipschitz maps \( f_i: \R^m\to \R^n \) such that the \( m \)-dimensional Hausdorff measure \( \H^m \) of the set \( E\setminus \cup_i f_i(\R^m) \) is zero. An \( m \)-rectifiable set \( E\in \R^n \) has a well-defined \( m \)-dimensional tangent plane \( \H^m \) almost everywhere. With an appropriate notion of orientation and multiplicity, rectifiable sets sets can be integrated against differential forms, and these \emph{(integer) rectifiable currents} satisfy a powerful compactness theorem \cite{federerfleming}.

Rectifiable sets also arise naturally in the study of densities of measures. Let \( s \) be a non-negative real number. The \( s \)-density of a Radon measure \( \mu \) on \( \R^n \) at a point \( p\in \R^n \) is defined to be the quantity \( \Theta^s(\mu, p)=\lim_{r\to 0} \mu(B(p,r))/(2r)^s \), where \( B(p,r) \) denotes the ball of radius \( r \) about \( p \). If \( \Theta^s(\mu,p) \) exists, and is positive and finite for \( \mu \) almost all \( p\in \R^n \), then \( s \) is an integer \cite{marstrand}, and moreover \( \mu<< \H^s \), and \( \mu(\R^n\setminus E)=0 \) for some \( s \)-rectifiable Borel set \( E\subset \R^n \) \cite{preiss}. Conversely, if \( \mu \) is the measure \( B\mapsto \H^m(E\cap B) \) for some \( m \)-rectifiable Borel set \( E \) with \( \H^m(E)<\i \), then \( \Theta^m(\mu,p)=1 \) for \( \mu \) almost every \( p\in \R^n \).

Lastly, an \( \H^m \) measurable set \( E \) with \( \H^m(E)<\i \) is \( m \)-rectifiable if and only if for every \( \H^m \) measurable subset \( A \) of \( E \) with \( \H^m(A)>0 \), the set of linear \( m \)-planes \( K \) for which the orthogonal projection of \( A \) onto \( K \) is an \( \H^m \) null set, is itself null in the usual measure on the Grassmannian of linear \( m \)-planes in \( \R^n \). This is the Besicovitch-Federer projection theorem. Roughly speaking, the \( m \)-rectifiable sets are those which almost always cast an \( m \)-dimensional shadow. 

Such characterizations of rectifiability are exceedingly useful. Preiss's theorem \cite{preiss} is a central tool in proving regularity of solutions to variational problems; the rectifiability of weak limits of approximate solutions can be recast in terms of the often more tractable problem of computing densities (e.g. \cite{lipschitz}.) However, there are situations in which densities cannot be computed directly via monotonicity formulae, such as in the case of non-isotropic integrands (e.g. \cite{almgrenannals}, \cite{elliptic},) and it is more useful to have a variational approach.

The main theorem of this paper, Theorem \ref{thm:rect}, is a variatonal characterization of rectifiability: under mild assumptions, a set is rectifiable if and only if its Hausdorff measure is lower semi-continuous under bounded Lipschitz perturbations.

Theorem \ref{thm:rect} is closely related to Almgren's analytic criterion for rectifiability of quasiminimal sets \cite{almgrenannals} 3.2(c): We assume less about the set \( E \), however (namely that it is semi-regular, whereas Almgren assumes that \( E \) has certain bounds on its upper density which are related to its quasiminimality constant.) However, we have no doubt that Almgren would have been aware of this result, because it is a straightforward application of Theorem \ref{thm:main}, a version of which is stated without proof in \cite{almgrenannals} 2.9(b). An alternative proof of Theorem \ref{thm:main} can be found in \cite{feuvrier}.

We shall prove Theorem \ref{thm:rect} using a modification of the classical Federer-Fleming projection theorem in which a set \( E \) with \( \H^m(E)<\i \) can be pushed onto an arbitrarily small \( m \)-dimensional grid without increasing the Hausdorff measure of \( E \) by more than a bounded amount. We modify this construction using the Besicovitch-Federer projection theorem so that in addition, a given purely \( m \)-unrectifiable\footnote{A set is purely \( m \)-unrectifiable if its intersection with every \( m \)-rectifiable set is \( \H^m \) null.} subset of \( E \) is sent to a \( \H^m \) null set.

\section{Main Result}

Notation and terminology will follow \cite{mattila}.
\begin{thm}
	\label{thm:main}
	Suppose \( M \) is a riemannian manifold of dimension \( n \) and that \( E\subset M \) is \( \H^m \) measurable with \( \H^m(E)<\i \) and \( \H^{m+1}(\bar{E})=0 \), where \( m \) is an integer between \( 0 \) and \( n \). Suppose \( U\subset E \) is \( \H^m \) measurable and purely \( m \)-unrectifiable. For each \( \e>0 \) there exists a Lipschitz map \( \psi_\e: M\to M \) which can be uniformly approximated by diffeomorphisms of \( M \) in the isotopy class of the identity, such that
	\begin{enumerate}
		\item \( \|\psi_\e-\Id\|_0<\e \);
		\item \( \psi_\e\equiv \Id \) away from an \( \e \)-neighborhood of \( U \);
		\item \( \H^m(\psi_\e(E\setminus U))<\H^m(E\setminus U)+\e \); and
		\item \( \H^m(\psi_\e(U))<\e \).
	\end{enumerate}
\end{thm}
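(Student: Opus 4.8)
The plan is to combine the two classical projection theorems in sequence, working on a small scale and patching locally. First I would reduce to the case where $E$ is contained in a single coordinate chart, using a partition of unity argument: since $\H^{m+1}(\bar E)=0$, the set $\bar E$ is covered by countably many charts, and by a Vitali-type argument one can find a countable collection of small balls, on each of which we will construct a perturbation supported there, with the perturbations having disjoint supports up to a negligible overlap. Inside a fixed chart the riemannian metric is comparable to the Euclidean one, so up to a controlled multiplicative constant (which can be absorbed into $\e$ by taking the balls small enough) it suffices to prove the statement for $E\subset\R^n$ with the Euclidean metric. This reduces everything to a local Euclidean construction.

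The second and central step is the Euclidean construction on a single cube $Q$ of small side length. Here I would run a \emph{Federer--Fleming projection} adapted so that it does not increase $\H^m$ by more than a fixed factor: subdivide $Q$ into a fine grid of subcubes, and on each $n$-cube radially project $E$ (minus the images of lower skeleta) away from a well-chosen center point onto the boundary, iterating down through dimensions $n, n-1, \dots, m+1$ so that the image of $E$ lands on the $m$-skeleton of the grid, with $\H^m$ controlled by $\H^m(E)$ times a dimensional constant. The centers of projection are chosen generically (an averaging/Chebyshev argument, as in \cite{federerfleming} or \cite{mattila}) so that the comparison constant is bounded; the resulting map is Lipschitz, equals the identity on the grid skeleton, and is uniformly close to $\Id$ since each subcube is tiny. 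Crucially, to handle condition (4) I would invoke the \textbf{Besicovitch--Federer projection theorem}: since $U$ is purely $m$-unrectifiable, for $\H^m$-almost every point and for almost every $m$-plane the orthogonal projection of (the relevant piece of) $U$ is $\H^m$-null, so the projection centers in the Federer--Fleming step can additionally be chosen so that the composite projection sends $U$ to an $\H^m$-null set on the $m$-skeleton. One then composes with a further small deformation retracting a neighborhood of the skeleton, and finally \emph{undoes} the construction on $E\setminus U$: the point is to push $U$ onto a null set while only slightly moving $E\setminus U$, which requires doing the radial projections only on the part of the grid meeting $U$, keeping track that $E\setminus U$ picks up at most $\e$ of extra measure.

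For the smoothing and isotopy statement, I would note that the Federer--Fleming radial projections, suitably truncated near the projection centers, are Lipschitz maps homotopic to the identity through Lipschitz maps supported in $Q$; a standard mollification produces diffeomorphisms in the identity isotopy class converging uniformly to $\psi_\e$, using that the map is the identity near $\partial Q$ and that small $C^0$-perturbations of the identity supported in a ball are isotopic to $\Id$. Assembling the local pieces over the countably many disjoint balls, with the errors forming a convergent series bounded by $\e$, yields the global $\psi_\e$ and verifies (1)--(4).

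\textbf{Main obstacle.} The hard part will be the simultaneous control of conditions (3) and (4): the Federer--Fleming machinery naturally bounds $\H^m$ of the \emph{image of all of} $E$, but here we must push $U$ to a null set while the measure of $E\setminus U$ is allowed to grow by only an arbitrarily small $\e$ (not by a fixed multiplicative factor). This forces the construction to be applied only on the grid cubes that actually meet $U$ and at an arbitrarily fine scale, and it requires carefully combining the Besicovitch--Federer null-projection property of $U$ with a quantitative estimate that the deformation, restricted to $E\setminus U$, has $\H^m$-image within $\e$ of the original — essentially because at fine scales $E\setminus U$ is nearly flat (rectifiable) and the radial projections move it very little. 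Getting the bookkeeping of these two competing estimates to close, uniformly across infinitely many local patches, is the technical heart of the argument.
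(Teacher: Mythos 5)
Your overall strategy --- a Federer--Fleming projection onto a fine $m$-skeleton, with projection centers chosen via Besicovitch--Federer so that the purely unrectifiable part lands on a null set, localized to the grid cubes meeting $U$ and smoothed into the identity isotopy class --- is the same as the paper's, and you correctly isolate the real difficulty: condition (3) demands an additive error $\e$ on $E\setminus U$, while the projection machinery only gives a multiplicative constant $C$. But your proposed resolution of that difficulty has a genuine gap. You argue that $E\setminus U$ picks up little measure ``essentially because at fine scales $E\setminus U$ is nearly flat (rectifiable) and the radial projections move it very little.'' This fails for two reasons. First, the theorem does not assume $E\setminus U$ is rectifiable --- $U$ is only required to be \emph{some} purely unrectifiable measurable subset, and $E\setminus U$ may itself contain unrectifiable pieces --- so ``nearly flat at fine scales'' has no basis. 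Second, and more fundamentally, even for a perfectly flat $m$-plane the Federer--Fleming projection distorts $\H^m$ by a multiplicative factor that is \emph{scale-invariant}: a plane slicing diagonally through a grid cube suffers the same relative measure increase no matter how fine the grid. Refining the scale makes the map $C^0$-close to the identity but does not make the measure distortion tend to $1$, so your bookkeeping cannot close as stated.

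The missing ingredient is a density argument. By the density theorems for $\H^m$-measurable sets (\cite{mattila} Theorem 6.2), ${\Theta^*}^m(E\setminus U,x)=0$ and ${\Theta^*}^m(U,x)\ge 2^{-m}$ for $\H^m$-a.e.\ $x\in U$. A Vitali-type covering then produces finitely many \emph{disjoint} balls $B(x_k,r_k)$ centered on $U$, covering all but $\d$ of $\H^m(U)$, with $\H^m((E\setminus U)\cap B(x_k,r_k))<\d r_k^m$ and $\H^m(U\cap B(x_k,r_k))>r_k^m/2$; summing gives $\sum_k \H^m((E\setminus U)\cap B(x_k,r_k))<2\d\,\H^m(E)$. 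One then supports the Federer--Fleming/Besicovitch--Federer map entirely inside (a fine grid neighborhood of) these balls. The multiplicative constant $C$ is thus applied only to a set of measure $O(\d\,\H^m(E))$, which is what converts the multiplicative loss into the required additive $\e$; the rest of $E\setminus U$ is simply not touched. Without this step --- restricting not merely to cubes meeting $U$, but to small balls centered on $U$ in which $E\setminus U$ is quantitatively negligible --- conditions (3) and (4) cannot be obtained simultaneously, since $E\setminus U$ may be interleaved with $U$ and fill every cube that $U$ meets.
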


In particular, if \( \e>0 \) is chosen small enough so that \( \e<\H^m(U)/4 \), then \[ \H^m(\psi_\e(E))<\H^m(E) - \H^m(U)/2. \] If \( E \) is semi-regular\footnote{See \cite{davidsemmes} Definition 3.29. A closed subset \( E \) of \( \R^n \) is \emph{semi-regular of dimension \( m \)} if there exists \( C<\i \) such that for each \( x\in \R^n \) and each \( 0<r\leq R \) the set \( E\cap B(x,R) \) can be covered by \( C r^{-m}R^m \) balls of radius \( r \). Ahlfors regular sets are semi-regular.}, then \( \psi_\e \) can be chosen so that its Lipschitz constant is bounded independently of \( \e \). A partial converse to Theorem \ref{thm:main} is the following:
\begin{thm}
	\label{thm:main2}
	Suppose \( M \) is a riemannian manifold of dimension \( n \) and that \( E\subset M \) is \( \H^m \) measurable with \( \H^m(E)<\i \). Suppose \( \eta>0 \) and that there exists a sequence of Lipschitz maps \( \left(\psi_i: M\to M \right)_{i\in \N} \) whose Lipschitz constants have an upper bound, such that \( \|\psi_i-\Id\|_0 \to 0 \) and
	\[ \H^m(\psi_i(E))<\H^m(E)-\eta \] for all \( i\in \N \). Then \( E \) is not \( m \)-rectifiable.
\end{thm}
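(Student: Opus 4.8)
The plan is to prove the contrapositive: assuming $E$ is $m$-rectifiable I will show $\liminf_i \H^m(\psi_i(E)) \ge \H^m(E)$, which is incompatible with the hypothesis $\H^m(\psi_i(E)) < \H^m(E)-\eta$. Fix $\delta>0$. Since $\H^m(E)<\i$ and $E$ is rectifiable, $\H^m$-almost all of $E$ is covered by countably many embedded $C^1$ $m$-submanifolds of $M$, so there is a finite disjoint family of $\H^m$-measurable sets $E_1,\dots,E_N$ with $E_j\subset\Gamma_j$ ($\Gamma_j$ a $C^1$ $m$-submanifold) and $\H^m(E\setminus\bigcup_j E_j)<\delta$; by inner regularity choose compact $K_j\subset E_j$ with $\sum_j\H^m(E_j\setminus K_j)<\delta$. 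The $K_j$ are pairwise disjoint compacta, so $3\rho:=\min_{j\ne j'}\operatorname{dist}(K_j,K_{j'})>0$. For each $j$, taking $\rho_j<\rho$ small and smoothing $\operatorname{dist}(\cdot,K_j)$ on $\Gamma_j$ produces a relatively open $U_j\subset\Gamma_j$ with compact $C^1$ boundary $\partial U_j$ such that $K_j\subset U_j\subset\{x\in\Gamma_j:\operatorname{dist}(x,K_j)<\rho_j\}$ and $\H^m(U_j)\le\H^m(K_j)+\delta_j$, where $\delta_j>0$ is at our disposal. Since $\rho_j<\rho$ the $U_j$ are pairwise disjoint; fixing an orientation on each $\Gamma_j$, $S:=\sum_j[\![U_j]\!]$ is an integer-multiplicity rectifiable $m$-current of finite mass and finite boundary mass, with
\[ \mathbf{M}(S)=\sum_j\H^m(U_j)\ \ge\ \H^m\Big(\bigcup_j K_j\Big)\ \ge\ \H^m(E)-2\delta. \]

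Let $\e_i:=\|\psi_i-\Id\|_0\to 0$ and let $L$ bound the Lipschitz constants of the $\psi_i$. For $i$ large, join $\Id$ to $\psi_i$ by the geodesic homotopy $H_i$ (available because the relevant points lie in a fixed compact region of $M$, where the injectivity radius is bounded below); $H_i$ displaces points by at most $\e_i$ and has Lipschitz constant bounded in terms of $L$ and the geometry. The homotopy formula
\[ \psi_{i\#}S-S=\partial\,H_{i\#}\big([\![0,1]\!]\times S\big)+H_{i\#}\big([\![0,1]\!]\times\partial S\big), \]
together with $\mathbf{M}(H_{i\#}([\![0,1]\!]\times S))\le C_L\,\e_i\,\mathbf{M}(S)$ and $\mathbf{M}(H_{i\#}([\![0,1]\!]\times\partial S))\le C_L\,\e_i\,\mathbf{M}(\partial S)$, shows $\psi_{i\#}S\to S$ in the flat topology. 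Lower semicontinuity of mass then gives $\mathbf{M}(S)\le\liminf_i\mathbf{M}(\psi_{i\#}S)$.

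The remaining point is $\mathbf{M}(\psi_{i\#}S)\le\H^m(\psi_i(E))+C_L\sum_j\delta_j+o_i(1)$. Write $\theta_i$ for the integer multiplicity of $\psi_{i\#}S$, so $\mathbf{M}(\psi_{i\#}S)=\int\theta_i\,d\H^m$, and note $\psi_{i\#}S$ is carried by $\psi_i(\operatorname{spt}S)\subset\bigcup_j\psi_i(\overline{U_j})$. Put $O_i:=\{y:\operatorname{dist}(y,\bigcup_j\partial U_j)>\e_i\}$. If $y\in O_i$, every point of $\operatorname{spt}S$ mapping to $y$ lies within $\e_i$ of $y$, hence (as $\e_i<\rho/2$) in a single $U_j$ and bounded away from $\partial U_j$; there $\psi_{i\#}S$ coincides near $y$ with $\psi_{i\#}[\![U_j]\!]$, and since $\psi_i$ is an $\e_i$-perturbation of the inclusion $U_j\hookrightarrow M$ it has local degree $1$ off the collar, so $\theta_i=\mathbf{1}_{U_j}$ near $y$; in particular $\{\theta_i\ge1\}\cap O_i\subset\bigcup_j\psi_i(U_j)$. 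Off $O_i$ the area formula gives
\[ \int_{M\setminus O_i}\theta_i\,d\H^m\ \le\ \sum_j\int_{\overline{U_j}\,\cap\,\{\operatorname{dist}(\cdot,\partial U_j)\le2\e_i\}}\big|\Lambda_m d\psi_i\big|\,d\H^m\ \le\ L^m\sum_j\H^m\big(\Gamma_j\cap\{\operatorname{dist}(\cdot,\partial U_j)\le2\e_i\}\big), \]
which tends to $0$ as $i\to\i$ since each $\partial U_j$ is a compact $C^1$ $(m-1)$-submanifold. Hence $\mathbf{M}(\psi_{i\#}S)\le\H^m(\bigcup_j\psi_i(U_j))+o_i(1)\le\H^m(\psi_i(E))+L^m\sum_j\H^m(U_j\setminus K_j)+o_i(1)$, using $\bigcup_j U_j\subset(\bigcup_j K_j)\cup(\bigcup_j(U_j\setminus K_j))$ and $\bigcup_j K_j\subset E$. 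Choosing $\delta_j$ so small that $\sum_j\H^m(U_j\setminus K_j)<\delta$, combining with $\mathbf{M}(S)\le\liminf_i\mathbf{M}(\psi_{i\#}S)$ and $\mathbf{M}(S)\ge\H^m(E)-2\delta$, and letting $\delta\downarrow0$ yields $\H^m(E)\le\liminf_i\H^m(\psi_i(E))$, contradicting the hypothesis; so $E$ is not $m$-rectifiable.

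The main obstacle is the multiplicity estimate of the previous paragraph. Since $\psi_i$ may fold $E$ onto itself at scale $\e_i$, the crude area-formula bound $\mathbf{M}(\psi_{i\#}S)\le\int_{\operatorname{spt}S}|\Lambda_m d\psi_i|\,d\H^m$ genuinely overcounts (its integrand need not converge to $1$), and one must instead extract the cancellation forced by $\|\psi_i-\Id\|_0\to0$ — precisely, that a perturbation of the inclusion of a $C^1$ domain has local degree $1$ away from its boundary — to see that $\theta_i$ is really $1$ except on a thin collar of $\bigcup_j\partial U_j$ whose contribution vanishes. The separation of the $U_j$ by $\rho$ is what reduces this to the single-piece case and prevents spurious overcounting of overlaps in $\psi_i(E)$.
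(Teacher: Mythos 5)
Your strategy---push forward an integral current \( S=\sum_j[\![U_j]\!] \), obtain flat convergence \( \psi_{i\#}S\to S \) from the homotopy formula, and conclude by lower semicontinuity of mass---is genuinely different from the paper's proof, which works directly with sets: it proves the uniform concentration estimate \( \H^m(\psi_i(E)\cap B(x,r))\geq(1-\e)^m\a_m r^m \) by a degree/retraction argument and then cites the Dal Maso--Morel--Solimini/David lower semicontinuity theorem. Your route, however, has a genuine gap at the step \( \mathbf{M}(\psi_{i\#}S)\leq\H^m(\psi_i(E))+o_i(1) \), specifically at the claim that the multiplicity \( \theta_i \) equals \( 1 \) on \( O_i \). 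A ``local degree \( 1 \)'' argument controls the degree of the composition \( \pi\circ\psi_i \) with a projection onto the approximate tangent plane, i.e.\ the \emph{signed sum of the multiplicities over a whole fiber of \( \pi \)}; it does not bound the multiplicity at an individual point of the image, which can exceed \( 1 \) far from \( \partial U_j \). Concretely, take \( m=1 \), \( n=2 \), \( U=(0,1)\times\{0\} \), partition \( U \) into \( N_i \) subintervals of length \( 1/N_i\ll\e_i \) with endpoints \( p_k,q_k \), and let \( \psi_i \) send the \( k \)-th subinterval to the concatenation: the segment \( \gamma_k \) from \( p_k \) to \( q_k \), then an arc \( \gamma_k' \) from \( q_k \) back to \( p_k \) disjoint from \( \gamma_k \) but within \( \e_i \) of it, then \( \gamma_k \) again. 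Extended to \( \R^2 \) by Kirszbraun, this has Lipschitz constant about \( 3 \) uniformly in \( i \) and \( \|\psi_i-\Id\|_0\lesssim\e_i \), yet \( \psi_{i\#}[\![U]\!] \) has multiplicity \( +2 \) on every \( \gamma_k \), so \( \mathbf{M}(\psi_{i\#}[\![U]\!])\to 3 \) while \( \H^1(\psi_i(U))\to 2 \): the inequality you need fails by a fixed amount, at points arbitrarily far from \( \partial U \).

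This example does not contradict the theorem (there \( \H^1(\psi_i(U))>\H^1(U) \)), but it shows that lower semicontinuity of \emph{mass} is the wrong bridge: mass counts net multiplicity, and a \( C^0 \)-small, uniformly Lipschitz perturbation can create net multiplicity \( \geq 2 \) on a set of measure bounded below, so \( \mathbf{M}(\psi_{i\#}S) \) need not be bounded above by \( \H^m(\psi_i(E)) \) up to errors that vanish. What the theorem needs is lower semicontinuity of the measure of the image \emph{set} (the ``size'' of the current), which does not follow from flat convergence; this is precisely the difficulty the uniform concentration property is designed to circumvent. The good news is that your degree computation for \( \pi\circ\psi_i \) is essentially the same one the paper uses, only deployed correctly: instead of trying to control multiplicities, it shows that \( \pi(\psi_i(M_k)\cap B(x,r)) \) contains the full disc \( B(x,(1-\e)r)\cap T_xM_k \) (otherwise a radial projection away from a missed point would produce a map of an \( m \)-disc into \( S^{m-1} \) restricting to degree \( 1 \) on the boundary), which gives the lower bound \( \H^m(\psi_i(E)\cap B(x,r))\geq(1-\e)^m\a_m r^m \) directly, with no multiplicity bookkeeping. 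Rewriting your final step along those lines, or proving separately that \( \int(|\theta_i|-1)_+\to 0 \) (which the example above rules out in general), is what is missing.
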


It is straightforward to check that the assumption on the Lipschitz constants of the maps \( \psi_i \) is necessary. Combining Theorems \ref{thm:main} and \ref{thm:main2},
\begin{thm}
	\label{thm:rect}
	If \( E\subset M \) is closed and semi-regular with \( \H^m(E)<\i \), then \( E \) is \( m \)-rectifiable if and only if for every sequence of Lipschitz maps \( \left(\psi_i: M\to M \right)_{i\in \N} \) such that \( \|\psi_i-\Id\|_0 \to 0 \) and whose Lipschitz constants have an upper bound, it holds that \( \liminf \H^m(\psi_i(E))\geq \H^m(E) \).
\end{thm}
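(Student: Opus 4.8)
The plan is to derive Theorem \ref{thm:rect} as a direct combination of Theorems \ref{thm:main} and \ref{thm:main2}, since both directions are essentially already in hand. The only genuine work is reconciling the hypotheses: Theorem \ref{thm:rect} assumes \( E \) is closed and semi-regular with \( \H^m(E)<\i \), and these are precisely the conditions under which Theorem \ref{thm:main} produces maps with Lipschitz constant bounded independently of \( \e \), while Theorem \ref{thm:main2} needs no regularity at all beyond \( \H^m(E)<\i \).

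First I would prove the ``only if'' direction by contraposition. Suppose \( E \) is \emph{not} \( m \)-rectifiable. Since \( \H^m(E)<\i \), decompose \( E \) into its rectifiable and purely unrectifiable parts: there is an \( \H^m \) measurable, purely \( m \)-unrectifiable set \( U\subseteq E \) with \( \H^m(U)>0 \) (this is the standard decomposition of a set of finite \( \H^m \) measure; see \cite{mattila}). Fix any \( \e>0 \) with \( \e<\H^m(U)/4 \) and apply Theorem \ref{thm:main} to obtain \( \psi_\e \); by the remark immediately following Theorem \ref{thm:main}, properties (3) and (4) give \( \H^m(\psi_\e(E))<\H^m(E)-\H^m(U)/2 \), and because \( E \) is semi-regular the Lipschitz constant of \( \psi_\e \) may be taken to be some fixed \( L<\i \) independent of \( \e \). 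Taking \( \e=1/i \) for \( i\in\N \) large and setting \( \psi_i:=\psi_{1/i} \), property (1) yields \( \|\psi_i-\Id\|_0<1/i\to 0 \), the Lipschitz constants are uniformly bounded by \( L \), yet \( \H^m(\psi_i(E))<\H^m(E)-\H^m(U)/2 \) for all \( i \). Thus \( \liminf_i \H^m(\psi_i(E))\leq \H^m(E)-\H^m(U)/2<\H^m(E) \), so the semicontinuity property fails; this is the contrapositive of the claim.

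Conversely, for the ``if'' direction, again argue by contraposition: suppose there is a sequence \( (\psi_i) \) with uniformly bounded Lipschitz constants, \( \|\psi_i-\Id\|_0\to 0 \), and \( \liminf_i \H^m(\psi_i(E))<\H^m(E) \). Passing to a subsequence we may assume the \( \liminf \) is attained as a limit, so there exists \( \eta>0 \) and a (further) subsequence along which \( \H^m(\psi_i(E))<\H^m(E)-\eta \) for all \( i \); relabel this subsequence as \( (\psi_i) \). These are exactly the hypotheses of Theorem \ref{thm:main2} (the passage to a subsequence does not affect the bound on Lipschitz constants or the \( C^0 \) convergence), so Theorem \ref{thm:main2} gives that \( E \) is not \( m \)-rectifiable.

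I expect essentially no obstacle here — the theorem is a formal consequence of the two preceding results together with the standard rectifiable/unrectifiable decomposition. The only points requiring a word of care are: (i) verifying that the hypotheses of Theorem \ref{thm:rect} really do trigger the ``semi-regular'' clause of Theorem \ref{thm:main} so that one legitimately gets a \emph{uniform} Lipschitz bound (rather than bounds blowing up as \( \e\to0 \), which would break the hypothesis of Theorem \ref{thm:main2}); and (ii) the routine subsequence extraction converting a statement about \( \liminf \) into the ``for all \( i \)'' form that Theorem \ref{thm:main2} requires. Neither is more than a line.
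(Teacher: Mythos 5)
Your proof is correct and is exactly what the paper intends: Theorem \ref{thm:rect} is given with only the phrase ``Combining Theorems \ref{thm:main} and \ref{thm:main2}'' as justification, and your elaboration (the rectifiable/purely-unrectifiable decomposition together with the semi-regularity remark for one implication, and the subsequence extraction feeding into Theorem \ref{thm:main2} for the other) supplies precisely the omitted details. The only slip is cosmetic: you have swapped the labels ``if'' and ``only if'' --- your first argument, which deduces failure of semicontinuity from non-rectifiability, is the contrapositive of the ``if'' direction rather than the ``only if'' --- but both implications are established, so the proof is complete.
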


Theorem \ref{thm:main} will follow from Lemma \ref{lem:FF}, which is a generalization of \cite{elliptic} Lemma 3.03, which is itself a perturbation of \cite{davidsemmes} Proposition 3.1, a version of the classical Federer-Fleming projection theorem \cite{federerfleming} 5.5. Note that a version of Lemma \ref{lem:FF} is stated without proof in \cite{almgrenannals} (2.9 (b1.)) Given a (closed) \( n \)-cube \( Q\subset \R^n \), a set \( S\subset Q \) and \( j \ge 0 \), let \( \D_j(Q,S) \) denote the collection of \( n \)-cubes in the \( j \)-th dyadic subdivision of \( Q \) which intersect \( S \) non-trivially. For \( 0 \le d \le n \) let \( \D_{j,d}(Q,S) \) denote the collection of the \( d \)-dimensional faces of the \( n \)-cubes in \( \D_j(Q,S) \) and let \( S_{j,d}(Q,S)\subset Q \) denote the union of these faces.

\begin{lem}
	\label{lem:FF}
	Suppose that \( E\subset \R^n \) satisfies \( \H^{m+1}(\bar{E}) = 0 \), that \( W\subset E \) is \( \H^m \) measurable, purely \( m \)-unrectifiable, \( \H^m(W)<\i \), and \( \bar{W}\subset \ring{S} \), and that \( R\subset E \) satisfies \( \H^m(R)<\i \). For \( 0\leq j <\i \) large enough, there exists a Lipschitz map \( \phi: \R^n \to \R^n \) with the following properties:
	\begin{enumerate}
		\item\label{lem:ff:item:0} \( \phi \) can be uniformly approximated by diffeomorphisms which fix \( \R^n\setminus S_{j,n}(Q,S) \);
		\item\label{lem:ff:item:1} \( \phi = Id \text{ on } \R^n\setminus S_{j,n}(Q,S) \);
		\item\label{lem:ff:item:2} \( \phi(E\cap S_{j,n}(Q,S)) \subset S_{j,m}(Q,S) \cup \fr S_{j,n}(Q,S) \);
		\item\label{lem:ff:item:3} \( \phi(T) \subset T \) for each \( T \in \D_j(Q,S) \);
		\item\label{lem:ff:item:4} \( \H^m(\phi(W))=0 \); and
		\item\label{lem:ff:item:5} \( \H^m(\phi(R \cap T)) \le C \H^m(R \cap T) \) for all \( T \in \D_j(Q,S) \), where \( C \) depends only on \( n \);
		\item\label{lem:ff:item:6} If \( E \) is semi-regular, then the Lipschitz constant of \( \phi \) depends only on \( n \) and the semi-regularity constant of \( E \).
	\end{enumerate}
\end{lem}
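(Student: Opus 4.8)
The plan is to build $\phi$ by the classical Federer--Fleming scheme of successive radial projections, and to control the unrectifiable part by the integralgeometric reformulation of the Besicovitch--Federer theorem. Set $\phi=\phi_{m+1}\circ\phi_{m+2}\circ\cdots\circ\phi_n$, where for $k=n,n-1,\dots,m+1$ the map $\phi_k$ fixes everything already on the $(k-1)$-skeleton and, inside each $k$-dimensional face $G$ of $\D_j(Q,S)$, radially projects the interior of $G$ onto $\fr G$ from a center $a_G\in\ring G$ still to be chosen. For $j$ large enough that every cube of $\D_j(Q,S)$ meeting $\bar W$ is surrounded by cubes of $\D_j(Q,S)$ (possible since $\bar W\subset\ring S$), the support of $\phi$ stays inside $S_{j,n}(Q,S)$ and $\phi(W)$ never touches $\fr S_{j,n}(Q,S)$; properties \ref{lem:ff:item:1}--\ref{lem:ff:item:3} are then immediate, and \ref{lem:ff:item:0} holds because each radial push is a uniform limit of diffeomorphisms of $G$ fixing $\fr G$, which assemble to diffeomorphisms of $\R^n$ fixing the complement of $S_{j,n}(Q,S)$ and isotopic to $\Id$.

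At each stage $a_G$ will be taken in a full-measure subset of $\ring G$ selected by three conditions. First, $a_G$ should lie off the closure of the current image of $E$ in $G$; that closure is a Lipschitz image of a piece of $\bar E$, hence $\H^{m+1}$-null and so Lebesgue-null, so a.e.\ $a_G$ qualifies and then $\pi_{a_G}$ is Lipschitz on that set. Second, the usual averaging estimate $\int_G\H^m(\pi_a(A))\,da\lesssim_n(\operatorname{diam}G)^{\dim G}\H^m(A)$, coming from $\H^m(\pi_a(A))\le\int_A(C\operatorname{diam}(G)/|x-a|)^m\,d\H^m$ together with $\int_G|x-a|^{-m}\,da\lesssim_n(\operatorname{diam}G)^{\dim G-m}$ (finite since $m<\dim G$), shows that for $a_G$ in at least half of $G$ the Federer--Fleming doubling bound holds with a dimensional constant; composing the stages gives \ref{lem:ff:item:5}. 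Third, when $E$ is semi-regular the covering bound forces the $\kappa\operatorname{diam}(G)$-neighborhood of $E\cap G$ to fill less than half of $G$ once $\kappa$ depends only on $n$ and the regularity constant, so taking $a_G$ outside it makes $\operatorname{Lip}\pi_{a_G}\lesssim\kappa^{-1}$ and yields \ref{lem:ff:item:6}.

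The crux is \ref{lem:ff:item:4}. Write $\I^m$ for the $m$-dimensional integralgeometric measure; by the Besicovitch--Federer theorem, on a set of finite $\H^m$ measure $\I^m$ is a constant multiple of $\H^m$ on its rectifiable part and vanishes on its purely $m$-unrectifiable part, so such a set is purely $m$-unrectifiable iff its $\I^m$ vanishes. I would show by induction on the stages that, with centers chosen as above, the current image $W_G$ of $W$ inside each $k$-face $G$ with $k\ge m+2$ stays purely $m$-unrectifiable. On the part $W_G^{(i)}$ that $\pi_{a_G}$ sends into a facet $F_i'$ of $G$ one has $\pi_{a_G}|_{W_G^{(i)}}=q_i\circ P_i$, with $q_i$ the orthogonal projection of $\operatorname{aff}(G)$ onto the hyperplane of $F_i'$ and $P_i$ the perspectivity from $a_G$, which is a diffeomorphism on the open half-space containing $W_G^{(i)}$; hence $P_i(W_G^{(i)})$ is purely $m$-unrectifiable and $\I^m(P_i(W_G^{(i)}))=0$. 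For $K$ in the Grassmannian of $m$-planes of $\operatorname{aff}(G)$, $\pi_K\circ q_i$ equals an affine isomorphism onto $K$ composed with orthogonal projection onto an $m$-plane $\widetilde K_i(K)$; and because $k\ge m+2$ the assignment $K\mapsto\widetilde K_i(K)$ is a submersion onto an open set of Grassmannian, so a change of variables gives $\int\H^m\bigl(\pi_K(\pi_{a_G}(W_G^{(i)}))\bigr)\,dK\lesssim\I^m(P_i(W_G^{(i)}))=0$. Summing over the finitely many facets (the rays from $a_G$ meeting lower faces of $G$ cut out an $\H^m$-null subset of $W_G$) yields $\I^m(\pi_{a_G}(W_G))=0$, i.e.\ $\pi_{a_G}(W_G)$ is again purely $m$-unrectifiable, closing the induction.

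At the terminal stage $k=m+1$ this argument degenerates, since then $\widetilde K_i$ is constant; instead one imposes on $a_G$ the extra requirement $\H^m(\pi_{a_G}(W_G))=0$, valid for a.e.\ $a_G\in\ring G$: as $W_G$ is purely $m$-unrectifiable its $\I^m$ vanishes, which by definition means that the integral of $\#(\ell\cap W_G)$ over lines $\ell$ in $\operatorname{aff}(G)\cong\R^{m+1}$ is zero, while $\H^m(\pi_a(W_G))\le\int_{\fr G}\#(\operatorname{ray}_a(y)\cap W_G)\,d\H^m(y)$ and the substitution $(a,y)\mapsto(\text{line through }a,y;\ \text{position of }a\text{ on it})$ has bounded Jacobian once $a$ stays a fixed distance from $\fr G$, so $\int_{\ring G}\H^m(\pi_a(W_G))\,da\lesssim\operatorname{diam}(G)\cdot\I^m(W_G)=0$. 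Since $\phi(W)\subset S_{j,m}(Q,S)$, which is $m$-rectifiable, and $\I^m(\phi(W))=0$, it follows that $\H^m(\phi(W))=0$, giving \ref{lem:ff:item:4}. I expect the propagation of pure unrectifiability through the higher-codimension stages to be the real obstacle: a single radial projection can ``unfold'' a purely unrectifiable set into one with a positive-measure rectifiable part (compare orthogonal projections of the four-corner Cantor set in a bad direction), so no naive statement about Lipschitz, or even injective Lipschitz, images suffices; it is exactly the factorization through a projective diffeomorphism followed by an orthogonal projection whose effective direction sweeps an open family of $m$-planes when the codimension is at least two that converts the averaged Besicovitch--Federer statement into the invariance of the condition $\I^m=0$, and getting the factorizations, the Jacobians, the measure-zero exceptional sets of centers at every stage, and the behavior on lower skeleta all to cohere is the technical heart of the proof.
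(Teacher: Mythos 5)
Your overall scheme --- Federer--Fleming radial projections with centers chosen by averaging, plus an integralgeometric use of Besicovitch--Federer to kill the unrectifiable part --- is in the same spirit as the paper's, and your treatment of items \ref{lem:ff:item:0}--\ref{lem:ff:item:3}, \ref{lem:ff:item:5}, \ref{lem:ff:item:6} and of the terminal stage \( k=m+1 \) is essentially sound. But the step you yourself flag as the crux, propagating pure unrectifiability through a stage of codimension at least two, fails as written. For an \( m \)-plane \( K \) of \( \mathrm{aff}(G)\cong\R^k \) and \( q_i \) the orthogonal projection onto the hyperplane \( H_i \) of the facet \( F_i' \) with unit normal \( \nu_i \), one computes \( \ker(\pi_K\circ q_i)=q_i^{-1}(K^\perp)=(K^\perp\cap H_i)\oplus\mathrm{span}(\nu_i) \), hence \( \widetilde K_i(K)=\ker(\pi_K\circ q_i)^\perp=(K\oplus\mathrm{span}(\nu_i))\cap H_i \). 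In particular \( \widetilde K_i(K) \) always lies inside \( H_i \): the image of \( K\mapsto\widetilde K_i(K) \) is contained in the Grassmannian of \( m \)-planes of \( H_i \), a submanifold of codimension \( m \) in the Grassmannian of \( m \)-planes of \( \R^k \), so the map is not a submersion onto an open set and your change of variables collapses onto a null set. Equivalently, \( \pi_{K'}\circ q_i=\pi_{K'} \) for every \( m \)-plane \( K'\subset H_i \), so the integralgeometric measure of \( q_i(B) \) is governed by the projections of \( B \) onto the \( m \)-planes contained in \( H_i \) --- a null family about which the hypothesis \( \I^m(B)=0 \) says nothing. The conclusion you need is genuinely false for a fixed hyperplane: place the four-corner Cantor set \( C \) in the plane \( V=\R^2\times\{0\}\subset\R^3 \) and let \( H \) have normal equal to the direction in \( V \) along which \( C \) projects onto a set of positive length; then \( C\times\{0\} \) is purely \( 1 \)-unrectifiable in \( \R^3 \) but \( q_H(C\times\{0\}) \) contains a set of positive \( \H^1 \) measure in a line. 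Since the facets here are faces of a fixed dyadic grid, no genericity is available from \( H_i \), and the genericity of \( a_G \) does not enter your argument at this point because \( q_i \) does not depend on \( a_G \).

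The inductive step can be repaired, but only by averaging over the center as well: the fibers of \( \pi_K\circ\pi_{a_G} \) are pieces of \( (k-m) \)-planes through \( a_G \), and as \( (a_G,K) \) varies jointly these sweep out an open set of the affine Grassmannian of \( (k-m) \)-planes with controllable Jacobian, yielding \( \int_G\I^m(\pi_{a}(W_G))\,da\lesssim\I^m(W_G)=0 \) and hence a further full-measure condition on \( a_G \); that is a different argument from the one you wrote, and is exactly your \( k=m+1 \) computation run in higher codimension. The paper sidesteps the issue entirely: the radial projections are only \emph{approximated} by ambient diffeomorphisms, which preserve pure unrectifiability for free at every stage, and the Besicovitch--Federer theorem is invoked once, to select nearly orthogonal projections onto the \( m \)-faces, with directions generic in the full Grassmannian, which annihilate the unrectifiable part at the final collapse. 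So the hole in your proof is confined to the intermediate stages \( k\ge m+2 \), but it is a real one.
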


The case that \( S=Q \) and \( R=E=\bar{E} \) is Lemma \cite{elliptic} Lemma 3.03\footnote{The proof of Lemma 3.03 differs from that of \cite{davidsemmes} Proposition 3.1 in that the usual radial projections involved in the construction of the map are approximated by ambient diffeomorphisms, by which unrectifiability of \( W \) is preserved, after which nearly orthogonal projections onto \( S_{j,m}(Q,Q) \) are selected via the Besicovitch-Federer projection theorem and preformed, whereby the unrectifiable part vanishes.}. For \( S\subsetneq Q \), the map \( \phi \) is modified so that its constituant projections occur only within the \( d \)-cubes in \( \D_{j,d}(Q,Q) \) which have non-trivial intersection with the interior of \( S_{j,n}(Q,S) \), for \( m<d\leq n \). The general case that the inclusions \( R\subset E\subset \bar{E} \) are possibly strict follows from the observation that in \cite{davidsemmes} Lemma 3.22, the set \( F \) need not be closed. The projection points are chosen so that the measure of the image of \( R\cap T \), instead of \( E\cap T \), is controlled.

\begin{proof}[Proof of Theorem \ref{thm:main}]
	Using charts, the general case will follow from the case \( M=\R^n \). By \cite{mattila} Theorem 6.2, we may assume without loss of generality that \( {\Theta^*}^m(E\setminus U,x)=0 \) and \( 2^{-m}\leq {\Theta^*}^m(U,x)\leq 1 \) for all \( x\in U \). Thus for each \( \d>0 \) and \( x\in U \) there exists a sequence \( r_i\to 0 \) such that \[ \H^m((E\setminus U)\cap B(x,r_i))< \d r_i^m \] and \[ r_i^m/2\H^m(U\cap B(x,r_i))\leq 2^{m+1}r_i^m. \]

	By \cite{mattila} Theorem 2.8 there exist a finite set of points \( \{ x_1,\dots, x_N \} \subset U \) and radii \( \{ r_1,\dots,r_N \}\subset \R^+ \) such that the balls \( B(x_k,r_k) \) are pairwise disjoint, 
	\begin{equation}
		\label{eq:upper0}
		\H^m(U\setminus \cup_{k=1}^N B(x_k,r_k))<\d,
	\end{equation} 
	and the inequalities
	\begin{equation}
		\label{eq:upper1}
		\H^m((E\setminus U)\cap B(x_k,r_k))< \d r_k^m,
	\end{equation}
	and
	\begin{equation}
		\label{eq:upper2}
		r_k^m/2< \H^m(U\cap B(x_k,r_k))
	\end{equation}
	hold for each \( k\in \{1,\dots,N\} \). By \eqref{eq:upper1} and \eqref{eq:upper2},
	\begin{equation}
		\label{eq:upper2.5}
		\sum_{k=1}^N \H^m((E\setminus U) \cap B(x_k, r_k))< 2 \d \H^m(E).
	\end{equation}
	
	By outer regularity, there exists \( \eta>0 \) such that the balls \( B(x_k,r_k+\eta) \) remain pairwise disjoint, and such that
	\begin{equation}
		\label{eq:upper3}
		\sum_{k=1}^N \H^m(E\cap B(x_k,r_k+\eta)\setminus B(x_k,r_k))<\d.
	\end{equation}
	
	Now we apply Lemma \ref{lem:FF}: Let \( Q\subset \R^n \) be a cube large enough so that \( Q\supset \cup_{k=1}^N B(x_k,r_k+\eta) \) and let \( S=\cup_{k=1}^N B(x_k,r_k+\eta/2) \). Let \( W=U\cap \left(\cup_{k=1}^N B(x_k,r_k)\right) \) and let \( R=E\setminus W \). Produce from Lemma \ref{lem:FF} the map \( \phi \) for \( j \) large enough so that 
	\begin{equation}
		\label{eq:upper4}
		S_{j,n}(Q,S)\subset \cup_{k=1}^N B(x_k,r_k+\eta)
	\end{equation}
	and
	\begin{equation}
		\label{eq:upper5}
		2^{-j}\diam(Q)<\e.
	\end{equation}
	Let \( \psi_\e \) be this map \( \phi \). It follows from \eqref{eq:upper5} that \( \|\psi_\e-\Id\|_0<\e \). Moreover by \eqref{eq:upper0} and Lemma \ref{lem:FF} \ref{lem:ff:item:1}, \ref{lem:ff:item:4}, and \ref{lem:ff:item:5},
	\begin{align*}
		\H^m(\psi_\e(U))&\leq \H^m(U\setminus S_{j,n}(Q,S)) + \H^m(\psi_\e(W)) + \H^m(\psi_\e(U\cap S_{j,n}(Q,S)\setminus \cup_{k=1}^N B(x_k,r_k) ))\\
		&\leq \d + 0 + C \H^m(R\cap S_{j,n}(Q,S)).
	\end{align*}
	By \eqref{eq:upper4}, \eqref{eq:upper2.5} and \eqref{eq:upper3},
	\begin{equation}
		\label{eq:upper5.5}
		\H^m(R\cap S_{j,n}(Q,S)) \leq \sum_{k=1}^N \H^m((E\setminus U )\cap B(x_k,r_k))+ \H^m(E\cap B(x_k,r_k+\eta) \setminus B(x_k,r_k)) < 2 \d \H^m(E) + \d,
	\end{equation}
	thus
	\begin{equation*}
		\H^m(\psi_\e(U)) < \d(1+ 2C \H^m(E) + C).
	\end{equation*}
	Likewise, by \eqref{eq:upper5.5} and Lemma \ref{lem:FF} \ref{lem:ff:item:1} and \ref{lem:ff:item:5},
	\begin{align*}
		\H^m(\psi_\e(E\setminus U))&\leq \H^m(E\setminus U\setminus S_{j,n}(Q,S)) + \H^m(\psi_\e(R\cap S_{j,n}(Q,S) ))\\
		&\leq \H^m(E\setminus U) + C \H^m(R\cap S_{j,n}(Q,S))\\
		&< \H^m(E\setminus U) + \d (2C \H^m(E)+ C).
	\end{align*}
	The result follows by setting \( \d = \e / (1+ 2C \H^m(E) + C) \).
\end{proof}

\begin{proof}[proof of Theorem \ref{thm:main2}]
	Again we may assume \( M=\R^n \) by working in charts. Suppose \( E \) is \( m \)-rectifiable. By \cite{mattila} Theorem 15.21 and outer regularity, we may assume without loss of generality that \( E \) is contained in a finite family of compact embedded \( m \)-dimensional \( C^1 \) submanifolds with boundary \( M_1,\dots,M_N \) of \( M \), such that \( \H^m(\cup M_j \setminus E)<\e \), for any \( \e>0 \). 
	
	Since the Lipschitz constants of the maps \( \psi_i \) have an upper bound, we may assume further without loss of generality that in fact \( E=\cup M_j \). The result follows from lower semicontinuity of Hausdorff measure for sequences with the uniform concentration property (see \cite{dms, moso}, also \cite{davidsingular} \S 35): 
	
	Let \( 0<\e<1/2 \) and suppose \( x\in E\setminus (\cup \p M_j) \). Fix \( k \) so that \( x\in M_k \) and let \( T_x M_k\subset \R^n \) denote the affine \( m \)-plane through \( x \) tangent to \( M_k \) at \( x \). There exists \( r(x)<\textrm{dist}(x,\cup \p M_j)/2 \) small enough so that within \( B(x, 2r(x)) \), the manifold \( M_k \) is a graph of some function \( g \) over \( B(x,2r(x))\cap T_x M_k \), and such that \( M_k\cap B(x,2r(x)) \) is contained in the tangent cone\footnote{By this we mean the set of points \( \{ y\in \R^n : \mathrm{dist}(y,T_x M_k)\leq \e \|y-x\| \} \).} of \( M_k \) of aperture \( \e \) about \( x \). Let \( \pi:\R^n\to \R^n \) denote orthogonal projection onto \( T_x M_k \). If \( 0<r\leq r(x) \), then for \( i \) large enough, the set \( \pi(\psi_i(M_k)\cap B(x,r)) \) contains \( B(x,(1-\e)r)\cap T_x M_k \): if not, let \[ \rho: T_x M_k\setminus \{p\} \to \p B(x,r) \cap T_x M_k \] denote the radial projection away from a point \( p \) of \[ (B(x,(1-\e)r)\cap T_x M_k) \setminus \pi(\psi_i(M_k)\cap B(x,r)). \] For \( i \) large enough, the map \( f\equiv \rho\circ\pi\circ\psi_i \) is defined on the neighborhood \( K \) of \( x \) in \( M_k \) given by the graph of \( g \) over \( B(x,3/2 r)\cap T_x M_k \), yet restricts to a degree \( 1 \) map \( f\lfloor_{\p K}: \p K\simeq S^{m-1} \to \p B(x,r) \cap T_x M_k \simeq S^{m-1}  \), a contradiction. 
	
	Therefore, \( \H^m(\psi_i(E)\cap B(x,r))\geq \H^m(B(x,(1-\e)r)\cap T_x M_k)= (1-\e)^m r^m \a_m r^m \), where \( \a_m \) denotes the \( m \)-dimensional Hausdorff measure of the unit ball in \( \R^m \), and lower-semicontinuity holds by Theorem 10.14 \cite{moso} (or Theorem 35.4 \cite{davidsingular},) a contradiction.
\end{proof}

\addcontentsline{toc}{section}{References} 
\bibliography{bibliography.bib, mybib.bib}{}
\bibliographystyle{amsalpha}
\end{document}